\newcommand{\floor}[1]{{\lfloor #1 \rfloor}}
\newcommand{\bfloor}[1]{{\left\lfloor #1 \right\rfloor}}
\newcommand{\ceil}[1]{{\lceil #1 \rceil}}
\newcommand{\kgmcomment}[1]{}
\newtheorem{theorem}{Theorem}[section]
\newtheorem*{thm*}{Theorem} 
\newtheorem{lemma}[theorem]{Lemma}
\newtheorem{corollary}[theorem]{Corollary}
\newtheorem*{cor*}{Corollary}
\theoremstyle{definition}
\newtheorem{definition}[theorem]{Definition}
\newtheorem{conjecture}[theorem]{Conjecture}
\newcommand{\st}{\colon\,}
\title{Monotone Paths in Dense Edge-Ordered Graphs}
\author{Kevin G. Milans
\thanks{West Virginia University, Morgantown, WV 26505,
({\tt milans@math.wvu.edu}). } }
\begin{document}
\maketitle

\newcommand{\el}{\ell}
\newcommand{\ml}{f}
\newcommand{\mt}{f^{\star}}
\begin{abstract}
The \emph{altitude} of a graph $G$, denoted $\ml(G)$, is the largest integer $k$ such that under each ordering of $E(G)$, there exists a path of length $k$ which traverses edges in increasing order.  In 1971, Chv\'atal and Koml\'os asked for $\ml(K_n)$, where $K_n$ is the complete graph on $n$ vertices.  In 1973, Graham and Kleitman proved that $\ml(K_n) \ge \sqrt{n - 3/4} - 1/2$ and in 1984, Calderbank, Chung, and Sturtevant proved that $\ml(K_n) \le (\frac{1}{2} + o(1))n$.  We show that $\ml(K_n) \ge (\frac{1}{20} - o(1))(n/\lg n)^{2/3}$.
\end{abstract}

\section{Introduction}

A \emph{totally ordered graph} is a graph $G$ that is associated with a total ordering of its vertex set $V(G)$ and a total ordering of its edge set $E(G)$.  We use $T(G)$ and $T'(G)$ to denote the total orderings of $V(G)$ and $E(G)$ respectively.  When only the vertices or only the edges of $G$ are totally ordered, we call $G$ an \emph{ordered graph} or an \emph{edge-ordered graph}, respectively.  An \emph{ordering}, \emph{edge-ordering}, or \emph{total ordering} of a graph $G$ is an ordered, edge-ordered, or totally ordered graph whose underlying graph is $G$.

In an edge-ordered graph $G$, a \emph{monotone path} is a path which traverses edges in increasing order with respect to $T'(G)$.  A \emph{monotone trail} is similar, except that a trail is allowed to revisit vertices.  The \emph{altitude} of a graph $G$, denoted $\ml(G)$, is the maximum integer $k$ such that every edge-ordering of $G$ contains a monotone path of length $k$.  Also, let $\mt(G)$ be the maximum integer $k$ such that every edge-ordering of $G$ contains a monotone trail of length $k$.  

In 1971, Chv\'atal and Koml\'os~\cite{CK} asked for $\ml(K_n)$ and $\mt(K_n)$, where $K_n$ denotes the complete graph on $n$ vertices.  Citing private communication, Chv\'atal and Koml\'os noted in their 1971 paper that Graham and Kleitman had already proved $\Omega(n^{1/2}) \le \ml(K_n) < (\frac{3}{4} + \varepsilon) n$ and established $\mt(K_n)$ exactly:  $\mt(K_n) = n-1$ unless $n\in\{3,5\}$, in which case $\mt(K_n) = n$.

To show $\mt(K_n) \ge n-1$, Graham and Kleitman~\cite{GK} proved that if $G$ has average degree $d$, then $\mt(G) \ge d$.  Friedgut communicated to Winkler~\cite{W} an elegant formulation of their proof, known as the \emph{pedestrian argument}.  For an $n$-vertex edge-ordered graph $G$, the pedestrian argument involves $n$ pedestrians, with one starting at each vertex in $G$.  An announcer calls out the names of the edges in order according to $T'(G)$.  When $e$ is called, both pedestrians at the endpoints of $e$ traverse $e$, trading places.  Since each pedestrian travels along a monotone trail and each edge is traversed by two pedestrians, the average length of a pedestrian's monotone trail is $2|E(G)|/n$, which equals $d$.  The pedestrian argument has recently been modified to produce monotone paths (see~\cite{LL} and~\cite{DMPRT}).

Determining the altitude of a graph appears to be difficult in general.  In 1973, Graham and Kleitman~\cite{GK} published their results on $\ml(K_n)$ and $\mt(G)$.  In particular, they proved that $\sqrt{n-3/4} - 1/2 \le \ml(K_n) < 3n/4$, and they conjectured that $\ml(K_n)$ is closer to their upper bound than their lower bound.  They also commented that, with additional effort, their lower bound could be improved to $\ml(K_n) \ge (c-o(1))\sqrt{n}$ for some $c>1$.  In his Master's thesis from the same year, R\"odl~\cite{R} proved that if $G$ has average degree $d$, then $\ml(G)\ge(1-o(1))\sqrt{d}$; for $G=K_n$, R\"odl's result matches the Graham--Kleitman lower bound asymptotically.  R\"odl also noticed that the ideas in the Graham--Kleitman upper bound can be combined with results in design theory to prove $\ml(K_n) \le (\frac{2}{3} + o(1))n$.  Alspach, Heinrich, and Graham (unpublished, see~\cite{CCS}) further improved the upper bound to $\ml(G) \le (\frac{7}{12}+o(1))n$.  In 1984, Calderbank, Chung, and Sturtevant~\cite{CCS} obtained the best known upper bound: $\ml(K_n) \le (\frac{1}{2} + o(1))n$.  After 1984, explicit progress on determining $\ml(K_n)$ slowed (but see~\cite{BCM} for exact values for $n \le 8$).  In the meantime, other interesting results on the altitude of graphs have appeared.  

In 2001, Roditty, Shoham, and Yuster~\cite{RSY} proved that $\ml(G) \le 9$ if $G$ is planar and showed that $\ml(C_n \vee \overline{K_2}) \ge 5$ for $n\ge 99$, where $C_n \vee \overline{K_2}$ is the planar graph obtained by joining the $n$-vertex cycle $C_n$ and a pair of non-adjacent vertices.  Consequently, the maximum of altitude of a planar graph is between $5$ and $9$.

Clearly, $\ml(G) \le \mt(G)$.  The \emph{edge-chromatic number} of $G$, denoted $\chi'(G)$, is the minimum $k$ such that $E(G)$ is the union of $k$ matchings.  Ordering $E(G)$ so that each matching is an interval shows that $\mt(G) \le \chi'(G)$.  Vizing's theorem~\cite{V} states that $\chi'(G) \le \Delta(G) + 1$, where $\Delta(G)$ is the maximum degree of $G$.  It follows that $\ml(G) \le \Delta(G) + 1$.  

Improving a result of Yuster~\cite{Y}, Alon~\cite{A} gave a short proof that there exist $k$-regular graphs $G$ with $\ml(G) \ge k$, as follows.  The \emph{girth} of $G$ is the length of a shortest cycle in $G$.  If $G$ has girth $g$, then every trail of length less than $g$ is a path.  Therefore $\ml(G) \ge \min\{g-1,\mt(G)\} \ge \min\{g-1,d\}$, where $d$ is the average degree of $G$.  In particular, if $G$ is $k$-regular and has girth larger than $k$, then $\ml(G) \ge k$.  For $k=3$, better constructions are known.  Mynhardt, Burger, Clark, Falvai1, and Henderson~\cite{MBCFH} characterized the $3$-regular graphs with girth at least $5$ and altitude 3, and then used the characterization to show that the flower snarks are examples of $3$-regular graphs with altitude $4$.  For $k \ge 4$, it remains open to decide whether there are graphs $G$ with $\Delta(G) = k$ and $\ml(G) = k+1$.  

A \emph{Hamiltonian path} in a graph is a path containing all of its vertices.  Katreni{\v{c}} and and Semani{\v{s}}in~\cite{KS} proved that deciding whether a given edge-ordered graph contains a Hamiltonian monotone path is NP-complete.  Although it seems likely that computing the altitude of a given graph is NP-hard or worse, we note that the result of Katreni\v{c} and Semani\v{s}in does not directly imply this.  

Lavrov and Loh~\cite{LL} investigated the maximum length of a monotone path in a random edge-ordering of $K_n$.  They showed that with probability tending to $1$, a random edge-ordering of $K_n$ contains a monotone path of length at least $0.85n$.  Consequently, edge-orderings of $K_n$ that give sublinear upper bounds on $\ml(K_n)$, if they exist, are rare.  They also proved that with probability at least $1/e - o(1)$, a random edge-ordering of $K_n$ contains a Hamiltonian monotone path.  The common strengthening of these results leads to a natural and beautiful conjecture. 

\begin{conjecture}[Lavrov--Loh~\cite{LL}]
With probability tending to $1$, a random edge-ordering of $K_n$ contains a Hamiltonian monotone path.
\end{conjecture}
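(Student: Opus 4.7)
The plan is to attack this conjecture via the absorption method, which has been remarkably effective for Hamiltonicity problems in random discrete structures. I would first encode the random edge-ordering of $K_n$ as i.i.d.\ uniform weights $w_e \in [0,1]$ on the edges, so that a Hamiltonian monotone path is a vertex sequence $v_1,\ldots,v_n$ with $w(v_iv_{i+1})$ strictly increasing. Partition the weight interval $[0,1]$ into three consecutive blocks $I_P$, $I_M$, $I_A$ of carefully chosen lengths; the strategy will use edges of weight in $I_P$ to build a long ``backbone'' path, edges in $I_M$ to connect the backbone to an absorber, and edges in $I_A$ to do the absorbing.

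First I would construct a \emph{monotone absorbing structure}: a short monotone path $A$, using only edges with weights in $I_A$, with the property that every sufficiently small set $S \subseteq V(K_n)\setminus V(A)$ can be inserted into $A$ while preserving monotonicity. The natural gadget is a disjoint union of many ``$v$-absorbers'': for each vertex $v$, a constant-length structure that can be locally rerouted to include $v$ with compatible edge weights. A first-moment computation should give $\Theta(n)$ candidate absorbers per vertex, and a randomized selection argument (\`a la the R\"odl nibble or a simple random deletion) should produce a reservoir covering every $v$ whose total length is $o(n)$.

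Next, after conditioning on the positions of the absorber edges, I would apply the Lavrov--Loh machinery (or a suitable variant of the pedestrian/greedy argument) to the remaining edges in $I_P$ to build a monotone path $P$ covering $(1-o(1))n$ vertices, with endpoints that can be joined to $A$ through an $I_M$-edge. The leftover vertices are then absorbed one at a time, producing a Hamiltonian monotone path whose edge weights ascend from $I_P$ through $I_M$ into $I_A$. The fact that the $1/e$ lower bound already beats $0$ suggests that the greedy extension routinely reaches the final vertex; the role of absorption is only to repair the $o(n)$ failures.

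The main obstacle will be designing absorbers that behave well under the global monotonicity constraint. Unlike in a random graph, where potential absorber edges are present independently, here a single total order couples the weights of absorber edges, backbone edges, and the ``hinge'' edges at the absorbed vertex, and insertion requires the new edge weights to fit precisely into gaps between consecutive weights of $P$---gaps that shrink to $O(1/n^2)$ on average. Maintaining enough independence across the $n$ absorbers to apply concentration, while honestly realizing the weight-interval decomposition inside a single uniformly random ordering, is where the serious work lies. A conditioned second-moment argument on a restricted count (paths whose weight profile stays close to the typical linear curve from $0$ to $1$) is worth pursuing in parallel, since the unconditioned second moment is almost certainly inflated by pairs of highly overlapping monotone paths and so cannot succeed on its own.
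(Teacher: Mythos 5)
This statement is not a theorem of the paper at all: it is the Lavrov--Loh conjecture, stated as an open problem, and the paper offers no proof of it (the best known results, quoted in the introduction, are a monotone path of length $0.85n$ with high probability and a Hamiltonian monotone path with probability at least $1/e - o(1)$). Your text is accordingly a research program, not a proof, and it has genuine gaps at both of its load-bearing steps. First, the absorber: you never construct a monotone absorbing gadget, and the obstacle you yourself flag is precisely the unsolved difficulty. Inserting a leftover vertex $v$ between consecutive path vertices requires the two new edge weights at $v$ to land, in the correct order, inside the gap between the surrounding path weights; along a near-Hamiltonian monotone path these gaps have size $O(1/n)$ on average, so a given vertex has only $O(1)$ (indeed, in the naive count, $O(1/n)$ per position, $O(1)$ in total) feasible insertion points, far too few for a union bound or concentration over $\Theta(n)$ leftover vertices. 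Reserving a weight block $I_A$ for the absorber does not by itself fix this, because all absorber, backbone, and hinge weights come from one exchangeable ordering and the ``gaps'' available for insertion are determined only after the backbone is built; no independence structure is exhibited that would let a nibble or random-selection argument go through.

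Second, the backbone: you invoke ``Lavrov--Loh machinery'' to produce a monotone path covering $(1-o(1))n$ vertices using only edges with weights in $I_P$. The known result gives $0.85n$ for a random edge-ordering of all of $K_n$; restricting to a constant-length weight interval puts you in a randomly edge-ordered $G(n,c)$, where no $(1-o(1))n$ bound (nor even the $0.85n$ bound) is established, and the absorption step cannot repair a linear-sized deficit. So the proposal identifies the right obstructions but supplies neither of the two constructions it needs, and the conjecture remains open.
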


Recently, De Silva, Molla, Pfender, Retter, and Tait~\cite{DMPRT} proved that $\ml(Q_n) \ge n/\lg n$ where $Q_n$ is the $n$-dimensional hypercube and $\lg$ denotes the base-2 logarithm.  They also showed that if $\omega(n) \to \infty$ and $p \le (\omega(n)\ln n)/n^{1/2}$, then with probability tending to $1$ the Erd\H{o}s--R\'enyi random graph $G(n,p)$ has altitude at least $(1-o(1))\frac{np}{\omega(n)\ln n}$.  Consequently, there are graphs with average degree $\sqrt{n}(\ln n)^2$ and altitude at least $(1-o(1))\sqrt{n}$.  These graphs are sparse and yet the lower bound on their altitude asymptotically matches the lower bound on $\ml(K_n)$ due to Graham and Kleitman.

In this paper, we improve R\"odl's result for sufficiently dense graphs.  We show that if $G$ is an $n$-vertex graph with average degree $d$ and $s^2/d \to 0$ where $s = \Theta(n^{1/3}(\log n)^{2/3})$, then $\ml(G) \ge (1-o(1))\frac{d}{4s}$.  For $G=K_n$, we obtain $\ml(K_n) \ge (\frac{1}{20} - o(1))(n/\lg n)^{2/3}$.  Our proof is based on a simple algorithm to extend monotone paths.

\section{Monotone Path Algorithm}\label{sec:alg}

In his Master's thesis, R\"odl~\cite{R} gave a simple and elegant argument that $\ml(G)\ge (1-o(1))\sqrt{d}$ where $d$ is the average degree of $G$, which we outline as follows.  Let $G$ be an edge-ordered graph with average degree $d$ and suppose that $k$ is an integer with $d\ge 2\binom{k+1}{2} = 2(1 + \cdots + k)$.  Obtain $G'$ from $G$ by marking at each vertex $v$ the $k$ largest edges incident to $v$ (or all edges incident to $v$ if $d(v)<k$) and then removing all marked edges.  Since $G'$ has average degree at least $d-2k$, by induction $G'$ contains a monotone path $x_0\ldots x_{k-1}$ of length $k-1$.  Since $x_{k-1}$ is not isolated in $G'$, it follows that $x_{k-1}$ is incident to at least $k$ edges in $E(G)-E(G')$, and one of these extends $x_0 \ldots x_{k-1}$ to a monotone path of length $k$.  R\"odl's idea of reserving large edges at each vertex for path extension plays a key role in our approach.  We make a slight change in that we require the vertices to have disjoint sets of reserved edges.  We organize the edges in a table.

Let $G$ be a totally ordered graph.  The \emph{height table} of $G$ is an array $A$ whose columns are indexed by $V(G)$ and rows are indexed by the positive integers.  Each cell in $A$ is empty or contains an edge in $G$.  For $u\in V(G)$ and a positive integer $i$, we use $A(i,u)$ to denote the contents of the cell in $A$ located in row $i$ and column $u$.  We order the cells of $A$ so that $A(i,u)$ precedes $A(i',u')$ if and only if $i<i'$ or $i=i'$ and $u$ precedes $u'$ in $T(G)$.  We define $A$ iteratively.  Given that the contents of all preceding cells have been defined, let $A(i,u)$ be the largest edge (relative to $T'(G)$) incident to $u$ not appearing in a preceding cell; if no such edge exists, then $A(i,u)$ is empty.  Note that each edge appears in exactly one cell in $A$.  We define the \emph{height} of $e$ in $G$, denoted $h_G(e)$, to be the index of the row in $A$ containing $e$.

Extending a given monotone path is a key step in our algorithm.
The \emph{height} of a nontrivial monotone path $x_0 \ldots x_k$ is the height of its last edge $x_{k-1}x_k$.

\begin{lemma}\label{lem:ext}
Let $G$ be a totally ordered graph.  For $1 \le k < r$, each monotone path of length $k$ and height $r$ extends to a monotone path of length $k+1$ and height at least $r-k$.
\end{lemma}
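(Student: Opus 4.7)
The plan is to exhibit the extending edge directly inside the height table $A$, in column $x_k$ of the path's last vertex, by exploiting the fact that edges along each column appear in strictly decreasing order. Write $P = x_0 \cdots x_k$ with last edge $e = x_{k-1}x_k$, and let $w \in \{x_{k-1},x_k\}$ denote the endpoint that claims $e$, so that $A(r,w) = e$.

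The first step is to show that the $r-1$ topmost cells of column $x_k$, namely $A(1,x_k), \ldots, A(r-1,x_k)$, are all occupied and contain edges strictly larger than $e$. This splits into two subcases. If $w = x_k$, column monotonicity is immediate: $A(r,x_k) = e$ forces $A(i,x_k) > e$ for every $i < r$. If $w = x_{k-1}$, then the cell $A(r-1,x_k)$ is filled before $A(r,x_{k-1}) = e$ simply because it lies in an earlier row, so $e$ is still unclaimed at that moment; by the greedy maximality in the definition of $A$, we have $A(r-1,x_k) \ge e$, and the inequality is strict because $e$ does not occupy column $x_k$ in this subcase. Column monotonicity then propagates $A(i,x_k) > e$ upward to every $i < r$.

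Next, I would restrict attention to the $k$ cells $A(r-k,x_k), \ldots, A(r-1,x_k)$ (which exist because $1 \le r-k \le r-1$). These contain $k$ distinct edges incident to $x_k$, each larger than $e$, so their other endpoints are $k$ distinct vertices. At most $k-1$ of those endpoints can lie in $\{x_0,\ldots,x_{k-2}\}$, and $x_{k-1}$ is excluded because a second edge from $x_{k-1}$ to $x_k$ strictly exceeding $e$ would be parallel to $e$, contradicting simplicity. Hence at least one of these $k$ edges reaches a new vertex $x_{k+1} \notin V(P)$, and appending this edge produces a monotone path of length $k+1$ whose terminal edge sits in row $r' \in \{r-k,\ldots,r-1\}$ and so has height at least $r-k$.

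The main obstacle I anticipate is bookkeeping in the case $w = x_{k-1}$: one must be sure that $A(r-1,x_k)$ is really defined before $A(r,x_{k-1}) = e$, irrespective of the relative $T(G)$-order of $x_{k-1}$ and $x_k$. Since the fill order on cells sorts first by row and only then by $T(G)$, the inequality $r-1 < r$ settles this independently of the vertex ordering, and the obstacle dissolves into a single row-comparison. Everything else is a counting argument on a length-$k$ window in one column.
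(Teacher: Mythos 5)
Your proof is correct and takes essentially the same approach as the paper: the $k$ cells $A(r-k,x_k),\ldots,A(r-1,x_k)$ all contain distinct edges incident to $x_k$ that exceed $x_{k-1}x_k$ in $T'(G)$, and a pigeonhole count on their other endpoints produces the extension with height at least $r-k$. Your case split on which endpoint's column holds the last edge is harmless but unnecessary; the paper gets both cases at once by noting that every cell $A(i,x_k)$ with $i<r$ is filled before the row-$r$ cell containing $x_{k-1}x_k$ (the fill order is row-major), so that edge is still unclaimed and the greedy rule forces $A(i,x_k) > x_{k-1}x_k$.
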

\begin{proof}
Let $A$ be the height table of $G$, and let $x_0 \ldots x_k$ be a monotone path of length $k$ and height $r$.  Since $x_{k-1}x_k$ appears in row $r$ in $A$, this edge did not already appear when $A(i,x_k)$ is defined for $i<r$.  It follows that for $i<r$, the cell $A(i,x_k)$ contains an edge incident to $x_k$ which is larger than $x_{k-1}x_k$ in $T'(G)$.  Let $S = \{A(i,x_k)\st r-k\le i\le r-1\}$.  Since $|S| = k$ and $x_{k-1}x_k\not \in S$, some edge in $S$ joins $x_k$ with a vertex outside $\{x_0, \ldots, x_{k-1}\}$ and extends the path as claimed.
\end{proof}

Starting with a single edge and iterating Lemma~\ref{lem:ext}, we obtain the following.

\begin{lemma}\label{lem:extlong}
Let $G$ be a totally ordered graph and let $x_0x_1$ be an edge in $G$ of height $r$.  If $t$ is a positive integer and $\binom{t}{2}<r$, then $G$ contains monotone path $x_0x_1\ldots x_t$ of height at least $r-\binom{t}{2}$.
\end{lemma}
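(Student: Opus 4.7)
The plan is to prove this by induction on $t$, using Lemma~\ref{lem:ext} as the workhorse for the inductive step. The base case $t=1$ is immediate: the path $x_0x_1$ has length $1$ and height $r$, and the claim reduces to $r \ge r - \binom{1}{2} = r$.

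For the inductive step, suppose we have already produced a monotone path $x_0 x_1 \ldots x_t$ of length $t$ and height $r_t \ge r - \binom{t}{2}$, and that we wish to extend it assuming $\binom{t+1}{2} < r$. To apply Lemma~\ref{lem:ext} with $k = t$, I need to verify the hypothesis $t < r_t$. Since $\binom{t+1}{2} = \binom{t}{2} + t < r$, we get $r_t \ge r - \binom{t}{2} > t$, which is exactly what is required. Lemma~\ref{lem:ext} then yields an extension $x_0 x_1 \ldots x_{t+1}$ of length $t+1$ and height at least $r_t - t \ge r - \binom{t}{2} - t = r - \binom{t+1}{2}$, completing the induction.

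There is no real obstacle here beyond bookkeeping: the triangular number $\binom{t}{2}$ arises precisely as the telescoping sum $0 + 1 + \cdots + (t-1)$ of the height losses incurred at each extension step, and the hypothesis $\binom{t}{2} < r$ is exactly what guarantees that Lemma~\ref{lem:ext} remains applicable throughout the entire sequence of $t-1$ extensions starting from the initial edge.
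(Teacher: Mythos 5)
Your proof is correct and follows essentially the same route as the paper: induction on $t$ with Lemma~\ref{lem:ext} supplying each extension, the only difference being that you make explicit the verification $k < r_t$ of Lemma~\ref{lem:ext}'s hypothesis, which the paper leaves implicit.
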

\begin{proof}
By induction on $t$.  The lemma is clear when $t=1$.  For $t>1$, the inductive hypothesis implies that $G$ contains a monotone path $x_0x_1\ldots x_{t-1}$ of height at least $r-\binom{t-1}{2}$.  With $k=t-1$, we apply Lemma~\ref{lem:ext} to obtain a monotone path $x_0\ldots x_t$ with height at least $(r-\binom{t-1}{2}) - (t-1)$ which equals $r-\binom{t}{2}$.
\end{proof}

Using Lemma~\ref{lem:extlong}, we match R\"odl's bound $\ml(G)\ge (1-o(1))\sqrt{d}$ asymptotically.  We include the short proof for completeness.

\begin{theorem}
If $G$ has average degree $d$, then $\ml(G)\ge \floor{1/2 + \sqrt{d}}$.
\end{theorem}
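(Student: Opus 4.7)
The plan is to combine Lemma~\ref{lem:extlong} with a simple pigeonhole bound on the height table. Fix an arbitrary ordering of $V(G)$ so that the given edge-ordering becomes a total ordering, let $A$ be the resulting height table, and let $r^{*}$ denote the maximum height of any edge. Since each edge of $G$ occupies exactly one cell of $A$ and rows $1, \ldots, r^{*}$ together contain $n r^{*}$ cells (one per column per row), we have $n r^{*} \ge |E(G)| = nd/2$, and thus some edge $x_0 x_1$ of $G$ satisfies $h_G(x_0 x_1) \ge d/2$.

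Next, set $t = \floor{1/2 + \sqrt{d}}$. The defining inequality $t \le 1/2 + \sqrt{d}$ rearranges to $(t - 1/2)^2 \le d$, i.e.\ $t(t-1) \le d - 1/4$, so $\binom{t}{2} \le d/2 - 1/8$. Combined with $r^{*} \ge d/2$ this gives $r^{*} - \binom{t}{2} \ge 1/8$; since both $r^{*}$ and $\binom{t}{2}$ are integers, it follows that $r^{*} - \binom{t}{2} \ge 1$, so the hypothesis $\binom{t}{2} < r^{*}$ of Lemma~\ref{lem:extlong} is satisfied.

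Applying Lemma~\ref{lem:extlong} to the edge $x_0 x_1$ with this value of $t$ then produces a monotone path $x_0 x_1 \ldots x_t$, which has length $t$. This yields $\ml(G) \ge \floor{1/2 + \sqrt{d}}$, as desired. I do not anticipate any genuine obstacle: the two ingredients (a global lower bound on the maximum height, and the height-consuming path-extension lemma) fit together directly, and the only care required is the rounding step showing $\binom{t}{2} < r^{*}$, which integrality dispatches at once.
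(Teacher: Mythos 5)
Your proposal is correct and follows essentially the same route as the paper: bound the maximum height $r$ of an edge below by $d/2$ via a counting argument on the height table, then apply Lemma~\ref{lem:extlong} with $t=\floor{1/2+\sqrt{d}}$ after checking $\binom{t}{2}<r$. The only cosmetic difference is your integrality step, which is not even needed since $\binom{t}{2}\le d/2-1/8 < d/2 \le r$ already gives the strict inequality directly.
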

\begin{proof}
Let $H$ be a total ordering of $G$, and let $x_0x_1$ be an edge of maximum height $r$.  Since each row of the height table contains $n$ cells, it follows that $r \ge |E(G)|/n = d/2$.  If $t$ is a positive integer and $\binom{t}{2} < d/2$, then we may apply Lemma~\ref{lem:extlong} to extend $x_0x_1$ to a monotone path of length $t$ in $H$.  Hence, $\binom{t}{2}<d/2$ implies that $\ml(G) \ge t$.  With $t=\floor{1/2 + \sqrt{d}}$, we have that $\binom{t}{2} < d/2$ and therefore $\ml(G) \ge \floor{1/2 + \sqrt{d}}$.
\end{proof}

Let $G$ be a totally ordered graph and let $x_0\ldots x_k$ be a monotone path in $G$.  Viewing height as a resource, extending $x_0 \ldots x_k$ becomes more expensive as $k$ grows.  When extending becomes too expensive, we delete $\{x_0, \ldots, x_{k-2}\}$ from $G$ to form a new totally ordered graph $G'$ (which inherits the orderings of $V(G)$ and $E(G)$), and we extend $x_{k-1}x_k$ to a monotone path in $G'$.  For this to work, we must show that the height of $x_{k-1}x_k$ does not decrease too much when we delete $\{x_0, \ldots, x_{k-2}\}$ from $G$.

\newcommand{\drop}{\mathrm{drop}}

\begin{definition}
Let $G$ be a totally ordered graph.  For $S\subseteq V(G)$ and an edge $e$ in $G-S$, we define $\drop(G,S,e)$ to be $h_G(e) - h_{G-S}(e)$.  For $s\le n-2$, let $g(n,s)$ be the maximum of $\drop(G,S,e)$ over all $n$-vertex totally ordered graphs $G$, all sets $S$ of $s$ vertices in $G$, and all edges $e\in E(G-S)$.
\end{definition}

Note that $g(n,s)$ is monotonic in $n$, since adding isolated vertices to a totally ordered graph $G$ and inserting them arbitrarily into the vertex ordering gives a larger totally ordered graph $G'$ such that $\drop(G,S,e) = \drop(G',S,e)$ for all $S\subseteq V(G)$ and $e\in E(G-S)$.

\begin{lemma}\label{lem:extend-del}
Let $G$ be an $n$-vertex totally ordered graph and let $x_0x_1$ be an edge of height $r$.  If $s$ is a positive integer and $s\le n-2$, then $G$ contains a monotone path extending $x_0x_1$ of length at least $sk + 1$, where $k=\floor{(r-1)/(\binom{s+1}{2}+g(n,s))}$.
\end{lemma}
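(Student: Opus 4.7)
The plan is to extend $x_0x_1$ iteratively in $k$ phases, where each phase applies Lemma~\ref{lem:extlong} with $t = s + 1$ to grow the current monotone path by $s$ new edges, and between consecutive phases I delete the next $s$ earliest not-yet-deleted vertices of the current path so that future extensions cannot revisit them. The height of the current end-edge drops by $\binom{s+1}{2}$ per extension (from Lemma~\ref{lem:extlong}) and by at most $g(n,s)$ per deletion (from the definition and monotonicity of $g$), which accounts for the constant $\binom{s+1}{2} + g(n,s)$ appearing in the formula for $k$.

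To carry this out, I would maintain the invariant that at the start of phase $j$ (for $1 \le j \le k$) there is a monotone path $P_{j-1}$ of length $s(j-1) + 1$ beginning with $x_0 x_1$ and ending at some edge $e_{j-1}$, together with a totally ordered subgraph $G_j$ of $G$ in which the first $s(j-1)$ vertices of $P_{j-1}$ have been deleted, such that $h_{G_j}(e_{j-1}) \ge r - (j-1)\bigl(\binom{s+1}{2} + g(n,s)\bigr)$. The base case $j = 1$ is immediate with $G_1 = G$ and $P_0 = x_0 x_1$. In phase $j$, the bound on $k$ forces $h_{G_j}(e_{j-1}) > \binom{s+1}{2}$, so Lemma~\ref{lem:extlong} applied in $G_j$ extends $e_{j-1}$ to a monotone path of $s+1$ edges whose last edge $e_j$ has height at least $h_{G_j}(e_{j-1}) - \binom{s+1}{2}$ in $G_j$; concatenating with $P_{j-1}$ yields $P_j$ of length $sj + 1$ in $G$. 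If $j < k$, I would then form $G_{j+1}$ by deleting from $G_j$ the next $s$ vertices of $P_j$; by the monotonicity of $g$ we have $h_{G_{j+1}}(e_j) \ge h_{G_j}(e_j) - g(n,s)$, which restores the invariant.

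The arithmetic pivot is that the hypothesis $k(\binom{s+1}{2} + g(n,s)) \le r - 1$ gives $\binom{s+1}{2} + (k-1)(\binom{s+1}{2} + g(n,s)) < r$, exactly the inequality needed to invoke Lemma~\ref{lem:extlong} in the final phase. The main bookkeeping obstacle will be verifying that the $s$ newly deleted vertices are always available in $G_j$ and disjoint from the endpoints of $e_{j-1}$, so that $e_{j-1}$ persists in $G_{j+1}$; this follows because these vertices are the first $s$ vertices of $P_{j-1}$ not yet deleted, hence lie strictly before the last two vertices of $P_{j-1}$. A minor subtlety is that the monotonicity of $g$ must be invoked to use $g(n,s)$ in place of $g(|V(G_j)|, s)$; this is legitimate provided $|V(G_j)| \ge s + 2$, which is guaranteed because $r$ is at most the maximum height in $G$ (bounded by $n-1$), keeping $k$ small enough that $n - (k-1)s \ge s + 2$.
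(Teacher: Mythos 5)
Your proposal is correct and is essentially the paper's own proof: the paper phrases the identical argument as an induction on $n$ --- extend $x_0x_1$ by $s+1$ vertices via Lemma~\ref{lem:extlong}, delete $S=\{x_0,\ldots,x_{s-1}\}$, bound the height loss of the new end-edge by $g(n,s)$ (using monotonicity of $g$ in $n$), and recurse on $G-S$ --- which is exactly your phase-by-phase iteration unrolled, with the same arithmetic yielding $k-1$ further phases. The only blemish is an index slip in your bookkeeping remark: the edge that must persist in $G_{j+1}$ is $e_j$, not $e_{j-1}$ (whose endpoints are among the deleted vertices), and your main invariant already states and uses the correct version.
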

\begin{proof}
By induction on $n$.  If $k=0$, then the lemma is clear.  Otherwise, $r-1 \ge \binom{s+1}{2} + g(n,s)$ and we may apply Lemma~\ref{lem:extlong} to obtain a monotone path $x_0\ldots x_{s+1}$ of height at least $r-\binom{s+1}{2}$.  Let $S=\{x_0, \ldots, x_{s-1}\}$ and let $G' = G-S$.  We have that $h_{G'}(x_sx_{s+1}) = h_G(x_sx_{s+1}) - \drop(G,S,x_sx_{s+1}) \ge r-\binom{s+1}{2}-g(n,s)$.

Applying the inductive hypothesis to $G'$ and $x_sx_{s+1}$, we obtain a monotone path $P'$ in $G'$ extending $x_sx_{s+1}$ of length at least $sk'+1$, where 
\[ k' = \bfloor{\frac{r-\binom{s+1}{2}-g(n,s)-1}{\binom{s+1}{2}+g(n-s,s)}} \ge \bfloor{\frac{r-\binom{s+1}{2}-g(n,s)-1}{\binom{s+1}{2}+g(n,s)}} = k-1.\]  
Prepending $x_0\ldots x_s$ to $P'$ produces a monotone path in $G$ of length at least $s+sk'+1$, and $s+sk'+1 \ge sk+1$.
\end{proof}

\section{The Token Game}

\newcommand{\dom}{Z}
\newcommand{\domd}{\{1,2,3,\ldots\} \times V(G')}
Our goal is to prove an upper bound on $g(n,s)$.  Let $G$ be an $n$-vertex totally ordered graph, and let $S$ be a set of $s$ vertices of $G$.  We analyze an iterative process which obtains the height table of $G-S$ from the height table of $G$.  Let $G'=G-S$, let $A$ be the array obtained from the height table of $G$ by deleting columns indexed by vertices in $S$, and let $A'$ be the height table of $G'$.  Note that the cells of both $A$ and $A'$ are indexed by $\dom$, where $\dom=\domd$.  We order $\dom$ in the same order as the corresponding cells in $A'$ are defined; that is, $(i,u) \le (i',v)$ if and only if $i<i'$ or $i=i'$ and $u\le v$ in $T(G')$.  For $\beta\in\dom$, the \emph{open down-set} of $\beta$, denoted $D(\beta)$ is $\{\gamma\in\dom\st \gamma < \beta\}$ and the \emph{closed up-set} of $\beta$, denoted $U[\beta]$ is $\{\gamma\in\dom\st \gamma \ge \beta\}$.  Similarly, the \emph{interval} $[\beta,\gamma]$ is $\{\delta\in\dom\st \beta\le\delta\le\gamma\}$.

We produce a sequence of arrays $\{A_\beta\st \beta\in\dom\}$ which initially resemble $A$ and later resemble $A'$.  For $\beta\in\dom$, the cells of $A_\beta$ are indexed by $\dom$ and are partitioned into a \emph{lower part} indexed by $D(\beta)$ and an \emph{upper part} indexed by $U[\beta]$.

For $\beta\in\dom$, each cell in $A_\beta$ is either empty, contains an edge in $G'$, or contains an object called a \emph{hole}.  Moreover, each edge in $G'$ appears in one cell in $A_\beta$.  Each $A_\beta$ also has a \emph{critical interval} $[(i,u),(j,u)]$, where $\beta = (i,u)$ and $j$ is the least integer such that $j\ge i$ and $A_\beta(j,u)$ does not contain a hole.

\begin{lemma}\label{lem:stdtbl}
There is a sequence of arrays $\{A_\beta\st \beta\in\dom\}$ such that each column in the initial array has at most $s$ holes, and for each $\beta\in\dom$ the following hold.
\begin{enumerate}
\item If $\delta < \beta$, then $A_\beta(\delta)=A'(\delta)$.
\item If $\delta \ge \beta$ and $A_\beta(\delta)$ does not contain a hole, then $A_\beta(\delta) = A(\delta)$.
\item If $\gamma$ is the successor of $\beta$ in $\dom$, then $A_\gamma$ is obtained from $A_\beta$ by swapping $A_\beta(\beta)$ and $A_\beta(\delta)$, where $\delta$ is in the critical interval of $A_\beta$.  Moreover, if $\beta$ and $\delta$ index cells in distinct columns $u$ and $v$, then $A_\beta(\delta)=uv$.
\end{enumerate}
\end{lemma}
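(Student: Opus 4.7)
The plan is to construct $\{A_\beta : \beta \in Z\}$ by induction on $\beta$, starting with a carefully chosen initial array and then performing one swap per step. Take $A_{\beta_0}$ (where $\beta_0$ is the minimum element of $Z$) to be the array obtained from $A$ by replacing each entry that is a foreign edge (one with an endpoint in $S$) by a hole. Since each vertex of $V(G')$ has at most $s$ neighbors in $S$, each column of $A_{\beta_0}$ has at most $s$ holes. Property (1) is vacuous at $\beta_0$, property (2) holds because the non-hole cells retain their $A$-entries, and every edge of $G'$ still occupies exactly one cell.

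For the inductive step, assume $A_\beta$ has been constructed satisfying (1) and (2); I construct $A_\gamma$ (for $\gamma$ the successor of $\beta$) by choosing a swap position $\delta$. Write $\beta = (i,u)$ and let $e = A'(\beta)$ be the target entry. When $e$ is an edge of $G'$, property (1) tells us that $e$ does not appear in any cell $< \beta$ of $A_\beta$, so $e$ occupies some cell $\delta$ in the upper part; property (2) then forces $\delta$ to be the cell where $A$ originally placed $e$. Writing $e = uw$, the cell $\delta$ is therefore in column $u$ or in column $w$, and in the cross-column case the entry $A_\beta(\delta) = uw$ automatically matches the constraint of (3).

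The heart of the induction is showing $\delta \in [(i,u), (j,u)]$. My approach is to maintain the auxiliary invariant that in the upper part of $A_\beta$, the holes in column $u$ occupy exactly those rows where $A$ originally held a foreign edge incident to $u$ or an edge since extracted to the lower part by a previous cross-column swap. This invariant implies that $(j, u)$ --- the first non-hole row in column $u$ at row $\geq i$ --- carries the largest-priority $G'$-edge at $u$ not already placed in $\{A'(\delta') : \delta' < \beta\}$, which is $e$ itself when $A$ placed $e$ in column $u$. When $A$ placed $e$ in column $w$, an analogous count bounds the row of $\delta = (r, w)$ by $j$, with the side constraint at rows $i$ and $j$ inherited from the total order on $Z$. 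After locating $\delta$, the swap is immediate: if $\delta = \beta$ it is trivial, and otherwise $A_\beta(\beta)$ must be a hole (else (2) and the uniqueness of $A$-entries would force $A_\beta(\beta) = A(\beta) = e$, reducing to the trivial case), so after the swap $A_\gamma(\beta) = e = A'(\beta)$ and $A_\gamma(\delta)$ is a hole, extending (1) and preserving (2). The case where $e$ is empty is handled similarly by swapping with an empty cell in the critical interval, whose existence follows from the same invariant.

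The chief obstacle is establishing the hole-placement invariant together with the claim that it forces $\delta$ into the critical interval. The delicate instance is the cross-column case, where $e = uw$ sits in column $w$ of $A$ and must be transported to column $u$; bounding the row $r$ of $\delta = (r,w)$ by $j$ requires reconciling the greedy fillings of $A$ (over $G$) and $A'$ (over $G'$) and confirming that the number of foreign edges plus prior cross-column moves affecting column $u$ up to row $i$ matches the row-offset of $e$ between its $A$-position and its $A'$-position. This bookkeeping, though natural given that the iteration is designed to simulate passage from $A$ to $A'$ one cell at a time, is where the precise definitions of the height table and of the total order on $Z$ carry the proof.
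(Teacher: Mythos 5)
Your setup matches the paper's: the same initial array (holes replace edges meeting $S$, at most $s$ per column), the same one-swap-per-step iteration, and the correct identification that everything hinges on showing that the cell $\delta$ of $A_\beta$ containing $e=A'(\beta)$ lies in the critical interval $[(i,u),(j,u)]$. But that hinge is exactly what you do not prove. Your plan is to maintain an auxiliary invariant describing where the holes sit, and then to argue by a count; in the cross-column case you only say that ``an analogous count bounds the row of $\delta$'' and you yourself flag this bookkeeping as the chief obstacle. Moreover the invariant as you state it is false: when $A'(\beta)$ is empty, the hole at $(i,u)$ is swapped into the cell $(j,u)$, which was \emph{empty} in $A$, and a within-column swap parks a hole at the $A$-position of an edge extracted by a \emph{same-column} (not cross-column) move. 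So the invariant would need repair before it could be used, and even then the decisive inequality $\delta\le(j,u)$ is asserted rather than derived. The same issue infects your side remark that if $A_\beta(\beta)$ is not a hole then ``uniqueness of $A$-entries'' forces $A_\beta(\beta)=A(\beta)=e$: that conclusion is true, but it is a consequence of the critical-interval claim, not a substitute for it.

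The missing step needs no hole bookkeeping at all; it is a short exchange argument comparing the two greedy constructions, using only properties (1) and (2) inductively. Since each edge of $G'$ occurs once in $A_\beta$ and the lower parts of $A_\beta$ and $A'$ agree by (1), their upper parts contain the same edges. If $\delta>(j,u)$, then by (2) $A(\delta)=A_\beta(\delta)=e$, so when the height table $A$ of $G$ filled the cell $(j,u)$ the edge $e$ (which is incident to $u$ because $e=A'(i,u)$) was still available; hence $A(j,u)=e'$ for some edge $e'$ incident to $u$ with $e'>e$ in $T'(G)$. By (2) again $e'=A_\beta(j,u)$ lies in the upper part of $A_\beta$, hence in the upper part of $A'$, so $e'$ was available when $A'$ filled the cell $\beta$ --- contradicting that the greedy choice there was $e$ rather than $e'$. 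This also settles your Case 2: if $A'(\beta)$ is empty, no edge at $u$ survives in the upper part of $A'$, hence none in the upper part of $A_\beta$, so $A_\beta(j,u)=A(j,u)$ must be empty and the swap with $(j,u)$ is legitimate. Without this argument (or a correct replacement for your invariant plus the count it is supposed to support), the proof is incomplete at its central point.
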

\begin{proof}
Recall that $A$ is obtained from the height table of $G$ by deleting columns indexed by vertices in $S$.  Note that $A$ omits every edge with both endpoints in $S$ and contains every edge in $G'$.  An edge $uv \in [S,\overline{S}]$ with $u\not\in S$ and $v\in S$ appears in $A$ if and only if $uv$ is in column $u$ in the height table of $G$.  Let $\alpha$ be the minimum element in $\dom$, and let $A_\alpha$ be the array obtained from $A$ by replacing edges in $[S,\overline{S}]$ with holes.  If $u$ indexes a column in $A_\alpha$, then each hole in column $u$ replaces an edge $uv$ in $G$ with $v\in S$, and therefore each column in $A_\alpha$ contains at most $s$ holes.  Clearly, every edge in $G'$ appears once in $A_\alpha$ and $A_\alpha$ satisfies properties (1) and (2).  

We obtain other arrays iteratively.  Let $\beta = (i,u)$, let $\gamma$ be the successor of $\beta$, and suppose that $A_\beta$ has been previously defined but $A_\gamma$ is not yet defined.  Analogously to $A_\beta$, we partition of the cells of $A'$ into a lower part indexed by $D(\beta)$ and an upper part indexed by $U[\beta]$.  Since $A_\beta$ and $A'$ contain the same set of edges and agree on their lower parts, it follows that the upper parts of $A_\beta$ and $A'$ contain the same edges (possibly in a different order).  We consider two cases, depending on whether $A'(\beta)$ is empty or contains an edge in $G'$.

\emph{Case 1}: $A'(\beta)$ is not empty.  Let $e=A'(\beta)$, and let $\delta$ be the index of the cell in $A_\beta$ containing $e$.  We claim that $\delta$ is in the critical interval $[(i,u),(j,u)]$ of $A_\beta$.  Since $e$ is in the upper part of $A'$, it follows that $e$ is in the upper part of $A_\beta$ and so $\delta\ge \beta = (i,u)$.  Since $\delta,(j,u) \in U[\beta]$ and neither $A_\beta(\delta)$ nor $A_\beta(j,u)$ contains a hole, it follows from (2) that $A(\delta) = A_\beta(\delta) = e$ and $A(j,u) = A_\beta(j,u)$.  Suppose for a contradiction that $\delta > (j,u)$.  Note that $e$ is available for $A(j,u)$ when building the height table of $G$, and so $A(j,u) = e'$ for some edge $e'$ incident to $u$ such that $e'>e$ in $T'(G)$.  Since $A_\beta(j,u) = A(j,u) = e'$, it follows that both $e$ and $e'$ appear in the upper part of $A_\beta$ and hence in the upper part of $A'$ also.  Therefore both $e$ and $e'$ are available for $A'(\beta)$ when building the height table of $G'$.  The selection of $e$ over $e'$ for $A'(\beta)$ implies that $e>e'$ in $T'(G')$, contradicting that $e'>e$ in $T'(G)$.  Therefore $\delta \le (j,u)$ and $\delta$ is in the critical interval of $A_\beta$ as claimed.  Obtain $A_\gamma$ from $A_\beta$ by swapping the contents of cells $A_\beta(\beta)$ and $A_\beta(\delta)$ (if $\beta=\delta$, then $A_\gamma = A_\beta$).  Note that if $\delta$ indexes a cell in column $v$ and $v\ne u$, then $A'(\beta) = e$ and $A(\delta) = e$ imply that $e$ is incident to both $u$ and $v$, so that $A_\beta(\delta) = e = uv$, satisfying (3).

We check that $A_\gamma$ satisfies (1) and (2).  Since $\gamma$ is the successor of $\beta$ and $A_\gamma(\beta) = A_\beta(\delta) = e = A'(\beta)$, it follows that $A_\gamma$ satisfies (1).  If the critical interval $[(i,u), (j,u)]$ of $A_\beta$ has size $1$, then $\beta = (i,u) = \delta = (j,u)$ and $A_\gamma = A_\beta$, implying that $A_\gamma$ satisfies (2).  Otherwise $j>i$ and $A_\beta(\beta)$ contains a hole.  Relative to $A_\beta$, the only change in the upper part of $A_\gamma$ is that $A_\gamma(\delta)$ becomes a hole after swapping $A_\beta(\beta)$ and $A_\beta(\delta)$, and so $A_\gamma$ satisfies (2).

\emph{Case 2}: $A'(\beta)$ is empty.  This implies that the upper part of $A'$ contains no edge incident to $u$, and so the upper part of $A_\beta$ also contains no edge incident to $u$.  In particular, $A_\beta(j,u)$ is empty, where $[(i,u), (j,u)]$ is the critical interval of $A_\beta$.  We obtain $A_\gamma$ from $A_\beta$ by swapping the contents of cells $A_\beta(i,u)$ and $A_\beta(j,u)$, satisfying (3).  Since $A_\gamma(\beta)$ and $A'(\beta)$ are both empty, $A_\gamma$ satisfies (1).  Relative to $A_\beta$, the upper part of $A_\gamma$ is either unchanged or contains a new hole at $A_\gamma(j,u)$.  It follows that $A_\gamma$ also satisfies (2). 
\end{proof}

Given the sequence of arrays $\{A_\beta\st\beta\in\dom\}$ from Lemma~\ref{lem:stdtbl}, we obtain a useful upper bound on $\drop(G,S,e)$.

\begin{lemma}\label{lem:drop}
Let $e$ be an edge in $G'$ and choose $\beta\in\dom$ so that $A'(\beta) = e$. If $[(i,u),(j,u)]$ is the critical interval of $A_\beta$, then $\drop(G,S,e) \le j-i$.
\end{lemma}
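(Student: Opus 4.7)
The plan is to read off the positions of $e$ in $A$ and in $A'$ by locating $e$ in the interpolating array $A_\beta$ provided by Lemma~\ref{lem:stdtbl}. Since $A'(\beta) = e$ and $\beta = (i,u)$, the definition of the height table $A'$ of $G'$ immediately gives $h_{G'}(e) = i$. So the task reduces to proving the upper bound $h_G(e) \le j$, after which the claim will follow from $\drop(G,S,e) = h_G(e) - h_{G'}(e)$.

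To bound $h_G(e)$, I would revisit Case~1 of the proof of Lemma~\ref{lem:stdtbl}, which is exactly the case at hand since $A'(\beta) = e$ is nonempty. That argument identifies a position $\delta \in \dom$ such that $A_\beta(\delta) = e$ and proves (by comparing what the height-table rules of $G$ and $G'$ would force at $(j,u)$ and $\beta$) that $\delta$ lies in the critical interval $[(i,u),(j,u)]$ of $A_\beta$. I would simply quote this conclusion.

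Now I would invoke property~(2) of Lemma~\ref{lem:stdtbl}: because $\delta \ge \beta$ and $A_\beta(\delta) = e$ is not a hole, we have $A(\delta) = e$. Since $A$ is obtained from the height table of $G$ by deleting columns indexed by $S$, which preserves row indices, the first coordinate of $\delta$ equals $h_G(e)$. Any element of the interval $[(i,u),(j,u)]$ has first coordinate at most $j$, so $h_G(e) \le j$, and combining with $h_{G'}(e) = i$ yields $\drop(G,S,e) \le j - i$. I don't anticipate a real obstacle here; the proof is essentially bookkeeping, since the heavy lifting — propagating $A$ toward $A'$ one cell at a time while keeping track of holes — was already done in Lemma~\ref{lem:stdtbl}.
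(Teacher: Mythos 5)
Your proof is correct and takes essentially the same route as the paper: locate the cell $\delta$ of $A_\beta$ containing $e$, argue that $\delta$ lies in the critical interval $[(i,u),(j,u)]$, then use property (2) of Lemma~\ref{lem:stdtbl} to conclude $A(\delta)=e$ and hence $h_G(e)\le j$, while $h_{G'}(e)=i$. The only cosmetic difference is that the paper deduces $\delta\in[(i,u),(j,u)]$ from the stated properties (1) and (3) applied to the successor array $A_\gamma$ (since $A_\gamma(\beta)=A'(\beta)=e$ forces the swap in (3) to involve $\delta$), whereas you re-quote the conclusion established in Case~1 of the proof of Lemma~\ref{lem:stdtbl}; both justifications are valid for the sequence constructed there.
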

\begin{proof}
Since $\beta = (i,u)$ and $e$ appears in row $i$ of the height table of $G'$, it follows that $h_{G'}(e) = i$.  Let $\delta$ index the cell in $A_\beta$ containing $e$.  Since the successor $A_\gamma$ of $A_\beta$ satisfies $A_\gamma(\beta) = A'(\beta) = e$, it follows that $A_\gamma$ is obtained from $A_\beta$ by swapping $A_\beta(\beta)$ with $A_\beta(\delta)$.  By (3), we have that $\delta$ is in the critical interval $[(i,u),(j,u)]$ of $A_\beta$, and so $\delta = (\ell,v)$ where $i\le \ell\le j$.  Since $\delta \ge \beta$ and $A_\beta(\delta)$ is not a hole, by (2) we have that $e = A_\beta(\delta) = A(\delta)$.  Therefore $e$ appears in row $\ell$ of the height table of $G$ and so $h_G(e) = \ell$.  We conclude $\drop(G,S,e) = \ell-i \le j-i$.
\end{proof}

We define the \emph{height} of a critical interval $[(i,u),(j,u)]$ to be $j-i$.  Note that the height of the critical interval of $A_\beta$ is at most the number of holes in column $u$ of $A_\beta$.  Also, by property (1) of Lemma~\ref{lem:stdtbl}, all holes of $A_\beta$ are contained in the upper part of $A_\beta$.  Analyzing the movement of the holes as $\beta$ increases in $\dom$ naturally leads to a single player game.

A \emph{token game} is a game played on an array $B$ with rows indexed by the positive integers and columns indexed by a finite list.  Let $B(i,u)$ denote the cell in row $i$ and column $u$.  Each cell in $B$ is empty or contains a \emph{token}.  A token in cell $B(i,u)$ is \emph{grounded} if all cells in column $u$ below $B(i,u)$ contain tokens; a token which is not grounded is \emph{ungrounded}.  One of the columns is distinguished as the \emph{active column}.

A step in a token game modifies $B$ to produce a new array $B'$, subject to certain rules.  Let $u$ be the active column.  If column $u$ contains grounded tokens, then the player may optionally move the highest grounded token in column $u$ from its cell $B(i,u)$ to an empty cell $B(i',v)$, provided that $i'\le i$ and no prior step in the game moved a token between columns $u$ and $v$.  Next, all ungrounded tokens in column $u$ shift down by one cell, and the active column advances cyclically.  A step in which a token moves between columns is a \emph{transfer step}.  The list of arrays produced in a token game is its \emph{transcript}.

\newcommand{\tg}{\hat g}
An \emph{$(n,s)$-token game} is a token game with $n$ columns, each of which initially contains at most $s$ tokens.  Let $\tg(n,s)$ be the maximum number of tokens that can be placed in a single column in an $(n,s)$-token game.  The following gives the connection between $g(n,s)$ and $\tg(n,s)$.  

\begin{lemma}\label{lem:game}
$g(n,s) \le \tg(n-s,s)$
\end{lemma}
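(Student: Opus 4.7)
The plan is to interpret the sequence of arrays $\{A_\beta\}_{\beta\in\dom}$ produced by Lemma~\ref{lem:stdtbl} as the transcript of an $(n-s,s)$-token game played on the columns $V(G')$, and then invoke Lemma~\ref{lem:drop}. For each column $u$, let $\ell_u$ denote the largest $i$ such that $(i,u)$ has been processed in the array sequence (so $\ell_u=0$ initially and increases by $1$ each time $u$ is the active column). I will declare a token present at row $r$ of column $u$ in the game exactly when $A_\beta$ has a hole at row $\ell_u+r$ of column $u$. Because $A_\alpha$ has at most $s$ holes per column, this gives a legal starting configuration for an $(n-s,s)$-game, and the cyclic advancement of the game's active column mirrors the advancement of $\beta$ through $\dom$.

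I would then verify that each array step is matched by a legal game step. Just before $\beta=(i,u)$ is processed, $\ell_u=i-1$ and the critical interval $[(i,u),(j,u)]$ corresponds exactly to a grounded block of $j-i$ tokens in rows $1,\ldots,j-i$ of column $u$, with row $j-i+1$ empty. The swap $A_\beta(\beta)\leftrightarrow A_\beta(\delta)$ falls into three subcases: $\delta=\beta$ (possible only when $j=i$) and $\delta=(j,u)$ each require no optional game move; if instead $\delta=(k,v)$ with $v\ne u$, property (3) of Lemma~\ref{lem:stdtbl} forces the swapped edge to be $uv$, and the game moves the highest grounded token of column $u$ to row $k-\ell_v$ of column $v$. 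A short case check, splitting on whether $v$ precedes $u$ in $T(G')$ (so $\ell_v=i$) or follows it (so $\ell_v=i-1$), shows $k-\ell_v\in\{1,\ldots,j-i\}$, so the $i'\le i$ rule holds; the ``no prior transfer'' rule is automatic since at most one edge joins $u$ and $v$. Finally, advancing $\ell_u$ by $1$ after processing $(i,u)$ has the same net effect on column $u$ as the game's shift of ungrounded tokens, because removing the bottom of a contiguous grounded stack followed by a uniform downward shift of every token produces the same column state as removing the top of that stack followed by a downward shift of only the ungrounded tokens.

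With the simulation established, the conclusion is immediate. Given $e\in E(G-S)$, choose $\beta$ with $A'(\beta)=e$; Lemma~\ref{lem:drop} gives $\drop(G,S,e)\le j-i$, and at that moment the simulated game has the $j-i$ grounded tokens of the critical block sitting in column $u$, so that column's token count is at least $j-i$. Since the simulation is a valid $(n-s,s)$-token game, the count is at most $\tg(n-s,s)$, yielding $g(n,s)\le\tg(n-s,s)$. The hard part is the verification in the second paragraph: one must track the three swap subcases and the two possible positions of $v$ relative to $u$ in $T(G')$, and confirm the equivalence between the array's ``swap plus $\ell_u$-advance'' dynamics and the game's ``move top grounded token plus shift ungrounded'' dynamics.
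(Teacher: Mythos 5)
Your proposal is correct and is essentially the paper's own argument: you convert the hole pattern of each $A_\beta$ into a token array via the per-column shift (your $\ell_u$ is exactly the paper's offset $k=j+i$ or $j+i-1$), check step-by-step legality using the critical interval and property (3), and finish with Lemma~\ref{lem:drop} applied to the grounded block. The only spot worth expanding is your claim that the no-repeated-transfer rule is ``automatic'' from simplicity of $G$: one should add, as the paper does, that a transfer between $u$ and $v$ places the unique edge $uv$ into the lower part, where it remains for all later arrays, so no second transfer between that pair can occur.
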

\begin{proof}
Let $G$ be an $n$-vertex totally ordered graph and let $S$ be a set of $s$ vertices in $G$ such that $\drop(G,S,e) = g(n,s)$ for some edge $e$ in $G-S$.  Let $G'=G-S$, let $A'$ be the height table of $G'$, obtain $A$ from the height table of $G$ by deleting columns indexed by $S$, and apply Lemma~\ref{lem:stdtbl} to obtain the sequence of arrays $\{A_\beta\st\beta\in\dom\}$.  We use this sequence to play the $(n-s,s)$-token game so that at least $g(n,s)$ tokens are placed in some column.

Construct a sequence $\{B_\beta\st\beta\in\dom\}$ of token arrays as follows.  Let $\beta=(i,u)$.  We put a token in $B_\beta(j,v)$ if and only if $A_\beta(k,v)$ contains a hole, where $k=j+i$ if $v<u$ in $T(G')$ and $k=j+i-1$ otherwise.  Equivalently, we obtain $B_\beta$ from $A_\beta$ by removing all edges so that only holes and empty cells remain, shifting cells down to discard the lower part of $A_\beta$, and replacing holes with tokens.

We claim that the sequence $\{B_\beta\st \beta\in\dom\}$ is the transcript of an $(n-s,s)$-token game in which the active column of $B_\beta$ is the second coordinate in $\beta$.  Let $\alpha$ be the minimum element in $\dom$, and note that each column in $A_\alpha$ contains at most $s$ holes by Lemma~\ref{lem:stdtbl}.  It follows that each column in $B_\alpha$ contains at most $s$ tokens, satisfying the initial condition of an $(n-s,s)$-token game.  

Let $\beta = (i,u)$ and let $\gamma$ be the successor of $\beta$.  From property (3) of Lemma~\ref{lem:stdtbl}, we have that $A_\gamma$ is obtained from $A_\beta$ by swapping $A_\beta(\beta)$ and $A_\beta(\delta)$ for some $\delta$ in the critical interval $[(i,u),(j,u)]$ of $A_\beta$.  If the critical interval has size $1$, then $A_\gamma = A_\beta$ and column $u$ of $B_\beta$ contains no grounded tokens.  We obtain $B_\gamma$ from $B_\beta$ by allowing the tokens in column $u$ to shift down by 1 cell.  The active column advances, completing a legal move in the token game.

Otherwise $j>i$.  Recall that the cells of $B_\beta$ correspond to the upper part of $A_\beta$.  The cells indexed by the critical interval $[(i,u),(j,u)]$ of $A_\beta$ correspond to the cells in $B_\beta$ of height at most $j-i$, except that the last cell $A_\beta(j,u)$ corresponds to $B_\beta(j-i+1,u)$ which has height $j-i+1$.

Since $A_\beta(\ell,u)$ contains a hole for $i\le \ell < j$, it follows that $B_\beta(\ell,u)$ contains a grounded token for $1\le \ell \le j-i$.  Since $A_\beta(\beta)$ contains a hole and $A_\beta(\delta)$ does not, it follows that $\delta > \beta$ and we obtain $A_\gamma$ from $A_\beta$ by swapping the contents of distinct cells $A_\beta(\beta)$ and $A_\beta(\delta)$.  Therefore we obtain $B_\gamma$ from $B_\beta$ by firstly moving the grounded token in $B_\beta(1,u)$ to an empty cell of height at most $j-i$ or to $B_\beta(j-i+1,u)$ and secondly shifting the contents of all cells in column $u$ down by 1 cell.  Equivalently, we obtain $B_\gamma$ from $B_\beta$ by optionally moving the highest grounded token from $B_\beta(j-i, u)$ to an empty cell of height at most $j-i$ and shifting the ungrounded tokens in column $u$ down by 1 cell.  This is allowed in a token game provided that we have not executed a transfer step between a pair of columns more than once.

Suppose that the transition from $B_\beta$ to $B_\gamma$ represents the first transfer step between distinct columns $u$ and $v$; we may assume without loss of generality that a token is moved from column $u$ in $B_\beta$ to column $v$ in $B_\gamma$.  It follows that a hole in $A_\beta(\beta)$ is swapped with the contents of $A_\beta(\delta)$ to form $A_\gamma$, where $\beta$ and $\delta$ index cells in columns $u$ and $v$ respectively.  By property (3) of Lemma~\ref{lem:stdtbl}, we have that $A_\beta(\delta) = uv$.  Since $\delta \ge \beta$, the edge $uv$ is in the upper part of $A_\beta$.  On the other hand, we have $\beta<\gamma$ and $A_\gamma(\beta) = A_\beta(\delta)=uv$, and so $uv$ is in the lower part of $A_\gamma$.  In fact, $A_{\gamma'}(\beta) = A_\gamma(\beta) = uv$ for $\gamma' \ge \gamma$, and so $uv$ is in the lower part of $A_{\gamma'}$ for all $\gamma' \ge \gamma$.  It follows that there are no subsequent transfer steps between columns $u$ and $v$.

Therefore $\{B_\gamma\st\gamma\in\dom\}$ is the sequence of arrays in an $(n-s,s)$-token game.  Let $e$ be an edge in $G'$ with $\drop(G,S,e) = g(n,s)$, and let $\beta$ be the index of the cell in $A'$ containing $e$.  By Lemma~\ref{lem:drop}, we have that $g(n,s) = \drop(G,S,e) \le j-i$, where $[(i,u),(j,u)]$ is the critical interval of $A_\beta$.  Since $A_\beta(\ell,u)$ contains a hole for $i\le \ell < j$, it follows that $B_\beta(\ell, u)$ contains a grounded token for $1\le \ell \le j-i$.  Hence, it is possible to place at least $j-i$ tokens in some column in an $(n-s,s)$-token game and so $\tg(n-s,s) \ge j-i$.
\end{proof}

It remains to analyze the $(n,s)$-token game.  Our main tool is to show that in an $(n,s)$-token game in which the number of tokens in a particular column grows substantially, it is possible to find a subgame with half the number of transfer steps in which a column gains a substantial number of tokens.  Eventually, we obtain a contradiction since the number of tokens in a column cannot grow by more than the total number of transfer steps.

\begin{lemma}\label{lem:tganal}
Let $B_0, \ldots, B_k$ be the transcript of a token game with a total of $m$ tokens and at most $2^\ell$ transfer steps.  Suppose that some column initially contains $a$ tokens in $B_0$ and ends with $b$ tokens in $B_k$.  If $a'$ and $r$ are integers such that $m<(a'+1)r/2$, then some subinterval of $B_0, \ldots, B_k$ is the transcript of a token game with $m$ tokens and at most $2^{\ell-1}$ transfer steps, in which some column initially has at most $a'$ tokens and ends with at least $b'$ tokens, where $b'=b-a-r + 1$.
\end{lemma}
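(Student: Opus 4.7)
The plan is to split the transcript at a balanced midpoint and then argue in cases based on where column $u$ sits relative to the threshold $a'$ at that midpoint.

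First I would choose an index $j^{\star}$ with both $B_0,\ldots,B_{j^{\star}}$ and $B_{j^{\star}},\ldots,B_k$ containing at most $2^{\ell-1}$ transfer steps (such $j^{\star}$ exists because the total is at most $2^{\ell}$), and let $c$ denote the number of tokens of column $u$ in $B_{j^{\star}}$. The easy case is $c \le a'$: the suffix $B_{j^{\star}},\ldots,B_k$ is then already the required subgame, since it has at most $2^{\ell-1}$ transfer steps, column $u$ starts with $c\le a'$ tokens, and it ends with $b \ge b'$ tokens (the latter because $a+r-1\ge 0$).

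The hypothesis $m<(a'+1)r/2$ comes into play when $c>a'$. Since each column with more than $a'$ tokens contributes at least $a'+1$ tokens toward the total, at most $\lfloor m/(a'+1)\rfloor < r/2$ columns can be simultaneously \emph{heavy}; in particular column $u$ is joined at $B_{j^{\star}}$ by fewer than $r/2-1$ other heavy columns. I expect this bound, together with the midpoint split, to resolve most of the remaining subcases: if $a\le a'$ and $c\ge b'$, the prefix $B_0,\ldots,B_{j^{\star}}$ already works with column $u$; and if column $u$ re-enters the light regime somewhere in $(j^{\star},k]$, then starting the suffix at the last such moment yields a valid subgame again with column $u$.

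The main obstacle is the residual subcase in which $a' < c < b'$ and column $u$ stays heavy throughout $B_{j^{\star}},\ldots,B_k$: column $u$ alone cannot witness the required light-to-heavy transition in either half, so a different column must be produced. Here I plan to track the $\ge b-c$ distinct source columns that feed tokens into column $u$ across the second half (distinct because each unordered pair of columns can be involved in at most one transfer step), and to exploit the scarcity of heavy columns to argue that most of these sources are light at the time they send to $u$. The combinatorial heart of the proof will be to align such a witness column together with a light moment and a subsequent moment at which it accumulates $\ge b'$ tokens, all inside a common sub-interval with at most $2^{\ell-1}$ transfer steps; making this matching go through cleanly, using $m<(a'+1)r/2$ in a quantitatively tight way, is the step I expect to be delicate.
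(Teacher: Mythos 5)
Your opening move---splitting at a balanced index $j^{\star}$ so each half has at most $2^{\ell-1}$ transfer steps---matches the paper, and your easy cases (column $u$ light at $j^{\star}$, or light again later in the suffix, or $a\le a'$ with $c\ge b'$) are fine. But the entire content of the lemma lies in the case you defer: when neither half works with column $u$ itself you must manufacture a different witness column, and your plan for that is missing the two ideas that make it work. First, the paper does not track all tokens feeding $u$ in one half; it takes the $r$ tokens, among those that end in column $u$ but were not always there, occupying the $r$ highest final positions, so each finishes at height at least $b'=b-a-r+1$, and it uses the observation that a token's height never increases during the game (a moved token goes to a cell no higher, and the only other motion is a downward shift). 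Hence at the moment such a token is transferred into $u$ it is a grounded token of its source column sitting at height at least $b'$, which certifies that the source column has at least $b'$ tokens \emph{at that moment}. Your sketch instead tracks arbitrary tokens entering $u$ in the second half and hopes to show the sources are ``light at the time they send to $u$''; that points the wrong way---lightness at the send time buys nothing, and without the top-$r$ selection plus height monotonicity you have no reason any source column ever holds $\ge b'$ tokens.

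Second, the hypothesis $m<(a'+1)r/2$ must be applied at a single common time: choose the half $I\in\{[0,j^{\star}],[j^{\star},k]\}$ containing at least $r/2$ of these transfer events (the source columns $v_i$ are distinct because each pair of columns transfers at most once), and observe that not all of them can start $I$ with more than $a'$ tokens since $(a'+1)(r/2)>m$. Some $v_i$ therefore has at most $a'$ tokens in the first array of $I$ and at least $b'$ tokens at its transfer moment $\ell_i\in I$; the subinterval from the start of $I$ to $B_{\ell_i}$ is the required subgame. Nothing in your outline pins the lightness and the heaviness of the \emph{same} column to two comparable times inside one half---this is exactly the ``delicate alignment'' you acknowledge, and it is the heart of the proof rather than a loose end. (Your case analysis also leaks: for instance $a>a'$ with $c\ge b'$ and $u$ heavy throughout the suffix is not covered by the residual case as you state it; the paper's argument needs no case split on $u$ at all, since the witness $v_i$ it produces works in every case.)
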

\begin{proof}
Choose $j$ so that both $B_0, \ldots, B_j$ and $B_j, \ldots, B_k$ are transcripts of token games with at most $2^{\ell-1}$ transfer steps.

Let $u$ index a column which initially has $a$ tokens in $B_0$ and ends with $b$ tokens in $B_k$, and let $R$ be the set of tokens which end in column $u$ but were not always in column $u$.  Clearly, $|R| \ge b-a$.  Let $\{t_1, \ldots, t_r\}$ be the tokens in $R$ which are in the $r$ highest positions in $B_k$, and let $R_0 = \{t_1, \ldots, t_r\}$.  Each token in $R_0$ has height at least $b'$ in $B_k$.  Moreover, since the height of a token is non-increasing throughout the game, it follows that each token in $R_0$ has height at least $b'$ in every array.

Since each $t_i \in R_0$ is moved to $u$ during the token game, we may choose columns $v_1, \ldots, v_r$ and indices $\ell_1, \ldots, \ell_r$ such that $t_i$ is moved from $v_i$ in $B_{\ell_i}$ to $u$ in $B_{\ell_i + 1}$.  Since a token game forbids more than one transfer between a pair of columns, $v_1, \ldots, v_r$ are distinct.  Choose $I\in \{[0, j], [j, k]\}$ so that $|R_1| \ge r/2$, where $R_1 = \{t_i \in R_0\st \{\ell_i, \ell_i + 1\} \subseteq I\}$.  Since $t_i$ has height at least $b'$ throughout the game, column $v_i$ in $B_{\ell_i}$ has at least $b'$ grounded tokens.  

Note that it is not possible for each of the columns in $\{v_i\st t_i\in R_1\}$ to begin the subgame $\{B_i\st i\in I\}$ with more than $a'$ tokens, since $(a'+1)|R_1| \ge (a'+1)(r/2) > m$.  It follows that some column $v_i$ has at most $a'$ tokens in the first array of $\{B_i\st i\in I\}$ but has at least $b'$ tokens in $B_{\ell_i}$.
\end{proof}

Iterating Lemma~\ref{lem:tganal} gives the following.

\begin{lemma}\label{lem:tganal2}
In a token game with $m$ tokens and at most $2^\ell$ transfer steps, each column gains a net of at most $1 + 2\ell\ceil{\sqrt{2m}}$ tokens.
\end{lemma}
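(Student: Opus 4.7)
The plan is to prove this by induction on $\ell$, using Lemma~\ref{lem:tganal} to reduce an $\ell$-level game to an $(\ell-1)$-level one while losing a controlled amount of column gain.

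For the base case $\ell = 0$, the game has at most one transfer step. Since the number of tokens in a column changes only during a transfer step, and a single transfer moves one token out of its column and into another, the net gain of any column is at most $1$, which matches the claimed bound at $\ell = 0$.

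For the inductive step, consider a column with $a$ initial tokens and $b$ final tokens. I would invoke Lemma~\ref{lem:tganal} with the balanced choice $r = \lceil\sqrt{2m}\rceil + 1$ and $a' = r - 1 = \lceil\sqrt{2m}\rceil$; this choice just satisfies the required inequality $m < (a'+1)r/2$, since $(a'+1)r = (\lceil\sqrt{2m}\rceil + 1)^2 \ge (\sqrt{2m}+1)^2 > 2m$. The lemma then produces a subgame with at most $2^{\ell-1}$ transfer steps containing a column whose net gain is at least $b' - a' = (b - a) - (r-1) - a' = (b - a) - 2\lceil\sqrt{2m}\rceil$. Applying the inductive hypothesis to this subgame bounds that gain by $1 + 2(\ell-1)\lceil\sqrt{2m}\rceil$, and rearranging yields $b - a \le 1 + 2\ell\lceil\sqrt{2m}\rceil$, as desired.

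The only real obstacle is the parameter tuning: one must minimize the per-level loss $(r - 1) + a'$ subject to $(a'+1)r > 2m$, which by AM--GM forces $a' + 1 \approx r \approx \sqrt{2m}$ and gives an additive loss of exactly $2\lceil\sqrt{2m}\rceil$ at each level. With that choice in hand, the induction proceeds mechanically from the base case.
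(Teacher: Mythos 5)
Your proof is correct and follows essentially the same route as the paper: the paper iterates Lemma~\ref{lem:tganal} with the balanced choice $a'=r=\ceil{\sqrt{2m}}$ (losing $(2t-1)r$ after $t$ iterations and finishing with the one-transfer-step case), while you package the same iteration as an induction on $\ell$ with $r=\ceil{\sqrt{2m}}+1$, $a'=\ceil{\sqrt{2m}}$, losing exactly $2\ceil{\sqrt{2m}}$ per level. Both parameter choices satisfy $m<(a'+1)r/2$ and yield the stated bound $1+2\ell\ceil{\sqrt{2m}}$, so the difference is purely presentational.
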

\begin{proof}
If $\ell = 0$, then the lemma is clear.  For larger $\ell$, suppose that there is a token game $B_0, \ldots, B_k$ with at most $2^\ell$ transfer steps in which some column begins with $a$ tokens and ends with $b$ tokens.  We iterate Lemma~\ref{lem:tganal} with $a'=r=\ceil{\sqrt{2m}}$ to obtain, for each $1\le t\le \ell$, a subgame with $m$ tokens and at most $2^{\ell-t}$ transfer steps in which some column begins with at most $a'$ tokens and ends with at least $(b-a) - (2t-1)r$ tokens.

With $\ell = t$, we obtain a token game with at most $1$ transfer step in which some column begins with at most $a'$ tokens and ends with at least $(b-a)-(2\ell - 1)r$ tokens.  We conclude $(b-a) - (2\ell-1)r - a' \le 1$, which implies $b-a \le 1 + 2\ell r$.
\end{proof}

\begin{corollary}\label{cor:tgbound}
Always $\tg(n,s) \le 4\lg n (\sqrt{2ns} + 1) + s + 1 = O(s+\sqrt{ns}\lg n)$.  In particular, if $n \ge \max\{2,s\}$, then $\tg(n,s) \le 11\sqrt{ns}\lg n$.
\end{corollary}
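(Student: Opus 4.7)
The plan is to combine Lemma~\ref{lem:tganal2} with two elementary counting bounds intrinsic to the definition of an $(n,s)$-token game. First, since there are $n$ columns and each initially contains at most $s$ tokens, the total number of tokens satisfies $m \le ns$. Second, since the rules of a token game forbid more than one transfer between any pair of columns, the total number of transfer steps is at most $\binom{n}{2} \le n^2/2$. Consequently, choosing any integer $\ell$ with $\ell \ge 2\lg n - 1$ (in particular $\ell \le 2\lg n$ after rounding up) gives $2^\ell \ge \binom{n}{2}$, so the hypothesis of Lemma~\ref{lem:tganal2} is satisfied.

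Applying Lemma~\ref{lem:tganal2} with these parameters yields that every column gains a net of at most $1 + 2\ell\lceil\sqrt{2m}\rceil \le 1 + 4\lg n\,(\sqrt{2ns}+1)$ tokens over the course of the game, where I used $\lceil x\rceil \le x+1$ and $m \le ns$. Since each column begins with at most $s$ tokens, the final count in any column is bounded by $s + 1 + 4\lg n\,(\sqrt{2ns}+1)$, which is the first advertised bound; writing $\sqrt{2ns}$ as $O(\sqrt{ns})$ absorbs the $\sqrt{2}$ and gives the $O$-estimate.

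For the ``in particular'' clause, under the assumption $n \ge \max\{2,s\}$ I would reduce $s + 1 + 4\lg n\,(\sqrt{2ns}+1)$ to $11\sqrt{ns}\,\lg n$ by routine slack-absorbing estimates: the dominant term $4\sqrt{2}\,\sqrt{ns}\,\lg n \approx 5.66\,\sqrt{ns}\,\lg n$ leaves comfortable room to absorb $4\lg n$, $s$, and the additive constant, using $\lg n \ge 1$, $s \le \sqrt{ns}$ (from $n \ge s$), and $\sqrt{ns}\,\lg n \ge 1$. (If $s=0$ the statement is vacuous since $\tg(n,0)=0$.)

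The main ``obstacle'' is really just verifying that no term on the right-hand side blows up under the hypotheses of the second bound; the substantive work has already been done in establishing Lemma~\ref{lem:tganal2}. In particular I would not expect to need any new combinatorial ideas here, only to state clearly the two inequalities $m \le ns$ and (transfer steps)~$\le \binom{n}{2}$ that connect the game's global parameters to $n$ and $s$.
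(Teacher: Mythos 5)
Your proof is correct and follows essentially the same route as the paper: bound the token count by $m \le ns$, bound the transfer steps by $\binom{n}{2}$ so that $\ell \le 2\lg n$ suffices, and feed these into Lemma~\ref{lem:tganal2} together with the initial count of at most $s$ tokens per column. One small caution on the ``in particular'' clause: the loose accounting $5.66+4+1+1$ slightly exceeds $11$, so to land under the stated constant you should use $\sqrt{ns}\ge\sqrt{2}$ (available since $n\ge 2$, $s\ge 1$) when absorbing the $4\lg n$ and the additive $1$, which the paper likewise leaves as routine algebra.
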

\begin{proof}
Consider an $(n,s)$-token game in which some column starts with at most $s$ tokens and ends with $\tg(n,s)$ tokens.  Let $b=\tg(n,s)$.  Note that an $(n,s)$-token game contains at most $2^\ell$ transfer steps provided that $2^\ell \ge \binom{n}{2}$; it suffices to choose $\ell=\floor{2\lg n}$.  Let $m$ be the number of tokens in our $(n,s)$-token game; clearly $m\le ns$.  It now follows from Lemma~\ref{lem:tganal2} that $b-s \le 1 + 2\ell\ceil{\sqrt{2m}} \le 1 + 4\lg n (\sqrt{2ns} + 1)$.  When $n\ge \max\{2,s\}$, algebra gives the simpler bound $\tg(n,s) \le 11\sqrt{ns}\lg n$.
\end{proof}

Improvements to Corollary~\ref{cor:tgbound} directly translate to improved bounds on $\ml(G)$ via Lemma~\ref{lem:extend-del} and Lemma~\ref{lem:game}.  Unfortunately, our next theorem shows that there is not much room to improve Corollary~\ref{cor:tgbound}.

\begin{theorem}\label{thm:tglower}
Always $\tg(n,s) \ge \max\{s, \sqrt{2ns} - 3s/2\}$.  Consequently, $\tg(n,s) \ge \Omega(s+\sqrt{ns})$.
\end{theorem}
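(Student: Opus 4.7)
The $\tg(n,s) \ge s$ half is immediate: place $s$ tokens in a single column at rows $1, \ldots, s$ and make no moves; that column retains $s$ tokens throughout the game. For the main bound $\tg(n,s) \ge \sqrt{2ns} - 3s/2$, the plan is to construct an explicit $(n,s)$-token game whose target column ends with a grounded stack of height at least $K = \lceil \sqrt{2ns} - 3s/2 \rceil$, organizing the $n$ columns into a hierarchy of $L \approx K/s$ levels.

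A level-$\ell$ column begins with its $s$ initial tokens placed at the (ungrounded) rows $\ell s + 1, \ldots, (\ell + 1) s$. Through the game it is scheduled to receive $s$ donations into each band $\{\ell' s + 1, \ldots, (\ell' + 1) s\}$ for $\ell' = 0, \ldots, \ell - 1$, from $s$ distinct level-$\ell'$ donors; once all $\ell s$ lower rows are filled, its initial tokens become grounded and the column has a stack of height $(\ell + 1) s$. The target is the unique level-$(L-1)$ column, ending with stack $Ls \ge K$. Cyclically ordering the active columns by level (level $0$ first, target last) ensures that when any column becomes active, its lower row-bands have been filled by earlier transfers in the cycle, so its grounded stack is fully built and available to donate; the height constraint $i' \le i$ is automatic because a level-$\ell'$ donor at stack $(\ell'+1)s$ donates only to rows $\le (\ell'+1)s$, and the pair-once restriction is respected since each pair of columns transfers at most one token.

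Counting: let $N_\ell$ denote the number of level-$\ell$ columns. Each level-$\ell$ column donates at most $(\ell+1)s$ tokens (equal to its final stack size, hence its total token count), and the pair constraint forces $N_\ell \ge s$ since each higher-level client needs $s$ distinct donors from level $\ell$. Balancing aggregate supply against aggregate demand gives $N_\ell (\ell+1) s \ge s \sum_{\ell' > \ell} N_{\ell'}$. Starting from $N_{L-1} = 1$ and solving the recurrence, summation of $N_\ell$ yields $n = \sum_\ell N_\ell = O(L^2 s)$, so that $K = Ls$ is achievable within the given $n$; careful bookkeeping (absorbing lower-order corrections into the additive $3s/2$) gives $K \ge \sqrt{2ns} - 3s/2$. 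The main obstacle is feasibility of the schedule in the formal game dynamics---verifying that every transfer occurs when the donor's grounded stack extends to the source row, the receiver's targeted row is empty, and the pair has not previously been used---which is arranged by the level-by-level active-cycle ordering and a careful interleaving of transfers within each cycle.
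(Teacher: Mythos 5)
The trivial half $\tg(n,s)\ge s$ is fine, but your main construction has a genuine gap in the supply--demand count, and the gap is fatal to the claimed bound. In your scheme a level-$\ell'$ column serves only as a band-$\ell'$ donor: its donations are supposed to land in rows $\ell' s+1,\ldots,(\ell'+1)s$ of its clients. But the game only lets a column give away its \emph{highest grounded} token, to a row no higher than that token's current height, and each donation lowers the donor's grounded top by one. Hence a level-$\ell'$ donor with stack $(\ell'+1)s$ can make at most $s$ donations into band-$\ell'$ rows before its top falls to $\ell' s$ and it can no longer reach that band; its remaining $\ell' s$ tokens are unusable in the role you assigned it. So the correct balance is $sN_{\ell'}\ge s\sum_{\ell>\ell'}N_\ell$, i.e.\ $N_{\ell'}\ge\sum_{\ell>\ell'}N_\ell$, not $N_{\ell'}(\ell'+1)s\ge s\sum_{\ell>\ell'}N_\ell$: the level sizes must grow geometrically, $N_\ell\ge 2^{L-2-\ell}$, forcing $n\ge 2^{L-2}$ and a final stack of only $O(s\lg n)$, far below $\sqrt{2ns}-3s/2$ once $n\gg s(\lg n)^2$. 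Your inequality conflates a donor's total token count with the number of donations it can make at band height, so the claimed conclusion $n=O(L^2 s)$ does not follow from the scheme you describe.

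To repair this you must let every donor give away its \emph{entire} stack, one token per client (to respect the pair-once rule), with the target rows decreasing in lockstep with the donor's shrinking top---and that is exactly what the paper does: it uses disjoint blocks $M_1,\ldots,M_k$ of sizes $s,2s,\ldots,ks$, where block $M_{j-1}$ carries a triangular pattern of stacks $1,2,\ldots,s(j-1)$ and is poured completely into $M_j$ (the donor with stack $i$ sends one token to each of $i$ distinct recipients, the recipients being filled bottom-up so the height constraint is met exactly), after which $\min\{i,s\}$ fresh initial tokens top off each column of $M_j$. With $s\binom{k+1}{2}\le n$ this yields a final stack of $sk\ge\sqrt{2ns}-3s/2$. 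Separately, your fixed initial placement at rows $\ell s+1,\ldots,(\ell+1)s$ creates the timing problem you defer: those tokens are ungrounded and drop one row every time their column becomes active, so they will not stay in their band while the $\ell s$ lower rows are being filled over many activation cycles; the paper sidesteps this entirely by leaving initial positions unspecified (``sufficiently high'') so that they simply drift down into place when needed.
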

\begin{proof}
Clearly, $\tg(n,s) \ge s$.  Let $k$ be the largest integer such that $s\binom{k+1}{2} \le n$ and note $k\ge \floor{\sqrt{2n/s} - 1/2}$.  Using that $n\ge s(1+2+\cdots + k)$, we let $M_1, \ldots, M_k$ be disjoint sets of columns such that $|M_j| = sj$ for each $j$.  Let $u_{j,1}, \ldots, u_{j,sj}$ be the columns in $M_j$.  For each $j$, we construct a triangular pattern of tokens in $M_j$ so that for $1\le i\le sj$, the column $u_{j,i}$ contains $i$ grounded tokens.  We assume that the initial positions of all tokens are sufficiently high so that they fall into place as needed.

For $M_1$, we initialize the board so that for $1\le i\le s$, the column $u_{1,i}$ starts with $i$ tokens.  For $j\ge 2$, we assume that we have played the token game so that for $1\le i\le s(j-1)$, the column $u_{j-1,i}$ in $M_{j-1}$ contains $i$ grounded tokens.  We use the tokens in $M_{j-1}$ to construct the desired pattern in $M_j$.  We move the highest grounded token from each column in $u_{j-1,1}, \ldots, u_{j-1,s(j-1)}$ to $u_{j,sj}$ in order.  Since $M_{j-1}$ has $s(j-1)$ columns, this puts $s(j-1)$ tokens in $u_{j,sj}$ and leaves a smaller triangular pattern in $M_{j-1}$ where $u_{j-1,i}$ contains $i-1$ grounded tokens.  Next, we move the highest grounded token from each column in $u_{j-1,2}, \ldots, u_{j-1,s(j-1)}$ to $u_{j,sj-1}$ in order; this places $s(j-1)-1$ tokens in $u_{j,sj-1}$.  Iterating this play, we move all tokens in $M_{j-1}$ to $M_j$.  We complete the triangular pattern by allowing $\min\{i,s\}$ tokens whose initial positions were high in column $u_{j,i}$ to fall into place.

Note that we require at most $s$ tokens in each column initially.  Moreover, since $M_1,\ldots,M_k$ are pairwise disjoint, no pair of columns is involved in more than one transfer step.  After all steps, column $u_{k,sk}$ in $M_k$ contains $sk$ tokens, implying that $\tg(n,s) \ge sk \ge s(\sqrt{2n/s} - 3/2)$.
\end{proof}

Although the bounds in Corollary~\ref{cor:tgbound} and Theorem~\ref{thm:tglower} establish the order of growth of $\tg(n,s)$ up to a logarithmic factor, it would still be interesting to obtain the exact order of growth.  If $\tg(n,s) = O(s+\sqrt{ns})$ as we suspect, then the log term in the lower bound in Corollary~\ref{cor:clique} can be removed.  We do not know how sharp the inequality in Lemma~\ref{lem:game} is; there may be room to make more substantial improvements to our upper bound on $g(n,s)$.  

\section{Dense Graphs}

\begin{theorem}\label{thm:dense}
Let $G$ be an $n$-vertex graph, let $s=n^{1/3}(11\lg n)^{2/3}$, and suppose that $n$ is sufficiently large so that $s\le n-2$.  If $G$ has average degree $d$ and $d>2$, then $\ml(G) > \frac{d}{4s}(1-\frac{2}{d})(1-\frac{1}{s})(1-\frac{4s^2}{d-2})$.
\end{theorem}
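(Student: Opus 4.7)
The plan is to apply Lemma~\ref{lem:extend-del} to an edge of maximum height in an arbitrary total ordering of $G$, after using Lemma~\ref{lem:game} and Corollary~\ref{cor:tgbound} to control the term $g(n,s)$. I would fix any edge-ordering of $G$, pair it with any vertex ordering to form a totally ordered graph $H$, and let $x_0x_1$ be an edge of maximum height $r$. Since the height table of $H$ stores all $nd/2$ edges in rows of at most $n$ cells, $r \ge d/2$. Lemma~\ref{lem:extend-del} then produces a monotone path extending $x_0x_1$ of length at least $sk+1$, where $c := \binom{s+1}{2} + g(n,s)$ and $k := \lfloor (r-1)/c \rfloor$.

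The key numerical observation is that the specific value $s = n^{1/3}(11\lg n)^{2/3}$ is calibrated so that $11\sqrt{ns}\,\lg n = s^2$; indeed $s^2 = n^{2/3}(11\lg n)^{4/3}$ and $\sqrt{ns} = n^{2/3}(11\lg n)^{1/3}$, and multiplying the latter by $11\lg n$ gives the former. Applying Lemma~\ref{lem:game} and Corollary~\ref{cor:tgbound} (valid since $n$ is taken large enough that $n-s \ge \max\{2,s\}$) then yields
\[
g(n,s) \;\le\; \hat g(n-s,s) \;\le\; 11\sqrt{(n-s)s}\,\lg(n-s) \;\le\; 11\sqrt{ns}\,\lg n \;=\; s^2.
\]
Hence $c \le \binom{s+1}{2} + s^2 = (3s^2+s)/2$, and the elementary inequality $(3s+1)(s-1) \le 4s^2$ (equivalent to $0 \le (s+1)^2$) rearranges to $c \le 2s^3/(s-1)$.

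To conclude, I would observe that $k$ is an integer with $k \le (r-1)/c$, so $k > (r-1)/c - 1$ strictly, and the path length satisfies
\[
sk + 1 \;>\; \frac{s(r-1)}{c} - s + 1 \;\ge\; \frac{s(d-2)}{2c} - s + 1 \;\ge\; \frac{(s-1)(d-2)}{4s^2} - s + 1 \;=\; \frac{(s-1)(d-2-4s^2)}{4s^2}.
\]
Using $\frac{d-2}{4s} = \frac{d}{4s}\bigl(1-\frac{2}{d}\bigr)$, $\frac{s-1}{s} = 1-\frac{1}{s}$, and $\frac{d-2-4s^2}{d-2} = 1-\frac{4s^2}{d-2}$, the final expression factors as $\frac{d}{4s}\bigl(1-\frac{2}{d}\bigr)\bigl(1-\frac{1}{s}\bigr)\bigl(1-\frac{4s^2}{d-2}\bigr)$. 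Since this argument applies to every edge-ordering of $G$ and path lengths are integers, the theorem follows.

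There is no real obstacle once the framework of Lemma~\ref{lem:extend-del} and Corollary~\ref{cor:tgbound} is in place: the only content is the numerical calibration $11\sqrt{ns}\,\lg n = s^2$, which balances the two contributions $\binom{s+1}{2}$ and $g(n,s)$ in the denominator so that both are $\Theta(s^2)$; the remainder is routine algebra to reach the stated factored form.
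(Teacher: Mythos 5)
Your proposal is, in substance, the paper's own proof: take an edge of maximum height (at least $d/2$ since the height table has $n$ columns), apply Lemma~\ref{lem:extend-del}, control $g$ through Lemma~\ref{lem:game} and Corollary~\ref{cor:tgbound}, exploit the calibration $11\sqrt{ns}\lg n=s^2$, and finish with algebra; your closing manipulations (bounding the floor by $(r-1)/c-1$ and using $c\le 2s^3/(s-1)$ via $(3s+1)(s-1)\le 4s^2$) are a correct repackaging of the paper's computation, which instead uses $\binom{s+1}{2}+s^2\le 2s^2$.

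There are two technical points you gloss over. First, $s=n^{1/3}(11\lg n)^{2/3}$ is not an integer, whereas Lemma~\ref{lem:extend-del} requires a positive integer $s$ and $g(n,s)$, $\hat g(n,s)$ are defined only for integer $s$; so the step ``length at least $sk+1$ with $c=\binom{s+1}{2}+g(n,s)$'' is not literally licensed. The paper runs the argument with $s'=\lfloor s\rfloor$, bounds $g(n,s')\le\hat g(n,s')\le 11\sqrt{ns'}\lg n\le 11\sqrt{ns}\lg n=s^2$, and uses $s'\ge s-1$ to produce the factor $(1-\frac1s)$. Note that in your version the factor $(1-\frac1s)$ is already spent on the inequality $c\le 2s^3/(s-1)$, so after rounding you would need a further (easy but unstated) check that the bound expressed in $s'$ dominates the stated bound in $s$, or else you should follow the paper's accounting, where the floor expression is weakened from $s'$ to $s$ before the final estimates. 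Second, you invoke the clean form of Corollary~\ref{cor:tgbound} at $(n-s,s)$, which requires $n-s\ge\max\{2,s\}$, i.e.\ $n\ge 2s$; this is strictly more than the stated hypothesis $s\le n-2$ (e.g.\ $n=200$ satisfies $s\le n-2$ but not $n\ge 2s$). The paper sidesteps this by monotonicity of $\hat g$ in $n$, writing $\hat g(n-s',s')\le\hat g(n,s')\le 11\sqrt{ns'}\lg n$, which needs only $n\ge\max\{2,s'\}$. Both points are routine to repair and do not affect the architecture of your argument.
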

\begin{proof}
Let $H$ be a total ordering of $G$.  Since $H$ has $nd/2$ edges, it follows that some edge $x_0x_1$ has height at least $d/2$.  With $s' = \floor{s}$, we apply Lemma~\ref{lem:extend-del} to obtain a monotone path of length at least $s'\floor{\frac{d/2 - 1}{\binom{s'+1}{2}+g(n,s')}} + 1$.  Using Lemma~\ref{lem:game}, Corollary~\ref{cor:tgbound}, and monotonicity of $\tg(n,s)$, we have $g(n,s') \le \tg(n-s',s') \le \tg(n,s') \le 11\sqrt{ns'}\lg n$.  We compute
\begin{align*}
s'\bfloor{\frac{d/2 - 1}{\binom{s'+1}{2}+g(n,s')}} + 1 & > s'\bfloor{\frac{{d/2} - 1}{\binom{s'+1}{2}+11\sqrt{ns'}\lg n}} \\
&\ge (s-1)\bfloor{\frac{{d/2}-1}{\binom{s+1}{2} + 11\sqrt{ns}\lg n}}\\
&= (s-1)\bfloor{\frac{{d/2}-1}{\binom{s+1}{2} + s^2}}\\
&\ge (s-1)\left(\frac{d/2 - 1}{2s^2} - 1\right)\\
&= \frac{d}{4s}\left(1-\frac{2}{d}\right)\left(1-\frac{1}{s}\right)\left(1-\frac{4s^2}{d-2}\right)
\end{align*}
\end{proof}

When the average degree $d$ grows faster than $s^2$, Theorem~\ref{thm:dense} improves R\"odl's result $\ml(G) \ge (1-o(1))\sqrt{d}$.  In terms of $n$, an $n$-vertex graph must have average degree at least $Cn^{2/3}(\lg n)^{4/3}$ for some constant $C$ in order for Theorem~\ref{thm:dense} to offer an improvement.

\begin{corollary}\label{cor:clique}
$\ml(K_n) \ge (\frac{1}{20}-o(1))(\frac{n}{\lg n})^{2/3}$
\end{corollary}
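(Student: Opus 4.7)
The plan is to derive Corollary~\ref{cor:clique} as a direct specialization of Theorem~\ref{thm:dense} to $G = K_n$. Since $K_n$ has average degree $d = n - 1$, I would first verify the hypotheses: with $s = n^{1/3}(11 \lg n)^{2/3}$, we have $s = o(n)$, so $s \le n-2$ for all sufficiently large $n$; and $d = n-1 > 2$ for $n \ge 4$. Hence Theorem~\ref{thm:dense} applies and gives
\[ \ml(K_n) > \frac{n-1}{4s}\left(1-\frac{2}{n-1}\right)\left(1-\frac{1}{s}\right)\left(1-\frac{4s^2}{n-3}\right). \]

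Next, I would check that each of the three parenthesized correction factors tends to $1$ as $n \to \infty$. The first two, $1 - 2/(n-1)$ and $1 - 1/s$, are obvious since $s \to \infty$. For the third factor, observe that $s^2 = n^{2/3}(11 \lg n)^{4/3}$, so $s^2/(n-3) = O((\lg n)^{4/3}/n^{1/3}) \to 0$. Therefore the product of the three correction factors is $1 - o(1)$.

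The remaining computation is to identify the leading constant in $(n-1)/(4s)$. Substituting the value of $s$,
\[ \frac{n-1}{4s} = \frac{n-1}{4 n^{1/3}(11 \lg n)^{2/3}} = \frac{1}{4 \cdot 11^{2/3}} \cdot \frac{n-1}{n^{1/3}(\lg n)^{2/3}} = (1-o(1)) \cdot \frac{1}{4 \cdot 11^{2/3}} \cdot \left(\frac{n}{\lg n}\right)^{2/3}. \]
A numerical check shows $11^{2/3} < 125^{1/3} = 5$, hence $4 \cdot 11^{2/3} < 20$, so $1/(4 \cdot 11^{2/3}) > 1/20$. Combining with the $(1-o(1))$ factor absorbed from the correction terms, we obtain
\[ \ml(K_n) \ge \left(\tfrac{1}{20} - o(1)\right)\left(\tfrac{n}{\lg n}\right)^{2/3}, \]
as claimed.

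There is no real obstacle here — the entire content of the corollary is packed into Theorem~\ref{thm:dense}, and the proof is just bookkeeping with the value $d = n - 1$ and the verification $4 \cdot 11^{2/3} < 20$. The only mildly delicate point is confirming that the error term $4s^2/(d-2)$ is genuinely $o(1)$ for this particular value of $s$, which reduces to $n^{2/3}(\lg n)^{4/3} = o(n)$, a clear bound.
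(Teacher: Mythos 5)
Your proof is correct and is essentially the paper's own argument: the paper likewise just specializes Theorem~\ref{thm:dense} to $d=n-1$, absorbs the three correction factors into a $(1-o(1))$ term, and uses $4\cdot 11^{2/3} < 20$ to get the constant $\tfrac{1}{20}$. Your version merely spells out the bookkeeping that the paper compresses into one line.
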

\begin{proof}
By Theorem~\ref{thm:dense}, we have $\ml(K_n) \ge \frac{n-1}{4n^{1/3}(11\lg n)^{2/3}}(1-o(1)) \ge \frac{n^{2/3}}{20(\lg n)^{2/3}}(1-o(1))$.
\end{proof}

We make no attempt to optimize the constant $1/20$.  Echoing remarks of Graham and Kleitman~\cite{GK}, we conjecture that our lower bound on $\ml(K_n)$ is not sharp and that the order of growth of $\ml(K_n)$ is closer to linear than to $(n/\log n)^{2/3}$.

\bibliographystyle{abbrv}
\bibliography{monotone-path}

\end{document}